\newtheorem{mainthm}{Theorem}
\newtheorem{cor}{Corollary}
\newtheorem{theorem}{Theorem}[section]
\newtheorem{lemma}[theorem]{Lemma}
\newtheorem{prop}[theorem]{Proposition}
\newtheorem{remark}[theorem]{Remark}
\theoremstyle{definition}
\newtheoremstyle{break}
  {\topsep}{\topsep}%
  {\itshape}{}%
  {\bfseries}{}%
  {\newline}{}%
\theoremstyle{break}
\numberwithin{equation}{section}
\newskip\aline \newskip\halfaline
\newcommand\bD{{\mathbb D}}
\newcommand{\bN}{{{\mathbb N}}}
\newcommand{\bP}{{{\mathbb P}}}
\newcommand\bR{{\mathbb R}}
\newcommand\bS{{\mathbb S}}
\DeclareMathOperator{\diam}{diam}
\DeclareMathOperator{\vol}{vol}
\newcommand{\eps}{{\epsilon}}
\def\inpr{\mathbin{\hbox to 6pt{\vrule height0.4pt width5pt depth0pt \kern-.4pt \vrule height6pt width0.4pt depth0pt\hss}}}
\newcommand{\hemi}{\bS^n_+}
\newcommand{\bhemi}{\partial\bS^n_+}
\def\XXint#1#2#3{{\setbox0=\hbox{$#1{#2#3}{\int}$ }
\vcenter{\hbox{$#2#3$ }}\kern-.59\wd0}}
\title{New Counterexamples to Min-Oo's Conjecture via Tunnels}
\author{Paul Sweeney Jr.}
\address{Paul Sweeney Jr.: Universit\`a di Trento, Dipartimento di Matematica, via Sommarive 14, 38123 Povo di Trento, Italy}
\email{paul(dot)sweeneyjr(at)unitn(dot)it}
\begin{document}
\begin{abstract}
Min-Oo's Conjecture is a positive curvature version of the positive mass theorem. Brendle, Marques, and Neves produced a perturbative counterexample to this conjecture. In 2021, Carlotto asked if it is possible to develop a novel gluing method in the setting of Min-Oo's Conjecture and in doing so produce new counterexamples. Here we build upon the perturbative counterexamples of Brendle--Marques--Neves in order to construct counterexamples that make advances on the theme expressed in Carlotto's question. These new counterexamples are non-perturbative in nature; moreover, we also produce examples with more complicated topology. Our main tool is a quantitative version of Gromov--Lawson Schoen--Yau surgery.
\end{abstract}
\maketitle
\section{Introduction} 
The interaction of positive scalar curvature and stable minimal surfaces has been a fruitful investigation in Riemannian geometry. The positive mass theorem is a landmark result in this direction. It says: if an asymptotically flat Riemannian manifold has nonnegative scalar curvature, then the mass of the manifold is nonnegative. Moreover, the mass is equal to zero if and only if the manifold is isometric to Euclidean space. The positive mass theorem was first proven for dimensions less than or equal to seven by Schoen and Yau \cite{SY} via minimal surface techniques. Using spinors, Witten \cite{W} proved the positive mass theorem in all dimensions for spin manifolds. Over the years, there have been new proofs in dimension three of the positive mass theorem using varying techniques. In 2001, Huisken and Ilmanen \cite{HI} proved the positive mass theorem by studying inverse mean curvature flow. More recently, Li \cite{L} used Ricci flow to prove the positive mass theorem. Moreover, there are several recent proofs of the positive mass theorem: Bray, Kazaras, Khuri, and Stern \cite{BKKS} via levels sets of harmonic functions, Miao \cite{Mi} via positive harmonic functions and capacity of sets, and Agostiniani, Mazzieri, and Oronzio \cite{AMO} via the Green's function.

Other related rigidity phenomena, involving positive scalar curvature and minimal surfaces, have been studied by Carlotto, Chodosh, and Eichmair \cite{CCE}. For example, the only asymptotically flat three-dimensional Riemannian manifold with nonnegative scalar curvature that contains a complete non-compact embedded surface $S$ which is a (component of the) boundary of some properly embedded full-dimensional submanifold and is area-minimizing under compactly supported deformations is Euclidean $\bR^3$. Moreover, $S$ is a flat plane. Now, this result should be compared with a special case of a localized gluing construction of Carlotto and Schoen \cite{CS}. For a nice discussion of the topic, see the article of Chru\'{s}ciel \cite{Ch}.  The general construction allows for localized solutions to the vacuum Einstein constraint equations and, roughly, states that one can construct asymptotically flat initial data sets that have positive ADM mass but are trivial outside a cone of a given angle. In particular, one can construct a metric on $\bR^3$ that is asymptotically flat, scalar flat, and is Euclidean on $\bR^2\times (0,\infty)$, but is not the Euclidean metric on $\bR^3$.

One is naturally led to wonder: Can something analogous be said for negative or positive curvature? The answer for negative curvature is that there are parallel results. For spin manifolds, Min-Oo \cite{M2} gave the first characterization of hyperbolic space in the context of a positive mass theorem, which was later refined by Andersson and Dahl \cite{AD}. By combining Chru\'{s}ciel and Herzlich \cite{CH1} with Wang \cite{Wa1}, the positive mass theorem for $n$-dimensional asymptotically hyperbolic spin manifolds was established in the negative curvature setting. Without the spin assumption, Andersson, Cai, and Galloway \cite{ACG} proved a hyperbolic positive mass theorem for $n$-dimensional manifolds, $3\leq n\leq 7$, with an additional hypothesis on the mass aspect function. Also, Sakovich \cite{Sa} used the Jang equation to prove the hyperbolic positive mass theorem in dimension three. In \cite{CD1}, Chru\'{s}ciel and Delay proved a positive mass theorem without rigidity for asymptotically hyperbolic manifolds in any dimension via a gluing argument. Lastly, Huang, Jang, and Martin \cite{HJM} proved the rigidity case for the hyperbolic positive mass theorem. For a history and a systematic presentation of these results see \cite[Appendix C]{C}. Moreover, there is also a localized gluing construction in the negative curvature setting due to Chru\'sciel and Delay \cite{CD}.

The story in the positive curvature setting differs from the other two. The analogous statement for the positive mass theorem is known as Min-Oo's Conjecture \cite{M}. The conjecture states: if $g$ is a smooth metric on the hemisphere $\hemi$ such that the scalar curvature, $R_g$, satisfies $R_g\geq n(n-1)$, the induced metric on the boundary $\bhemi$ agrees with the standard unit round metric on $\bS^{n-1}$, and the boundary $\bhemi$ is totally geodesic with respect to $g$, then $g$ is isometric to the standard unit round metric on $\hemi$. Many special cases of Min-Oo's Conjecture are known to be true (\cite{BM}, \cite{HW1}, \cite{HW2}, \cite{HLS}).

However, in \cite{BMN}, Brendle, Marques, and Neves constructed counterexamples to Min-Oo's Conjecture using perturbative techniques. They performed two perturbations to the standard unit round metric on $\hemi$. First, they perturbed the metric so that scalar curvature is strictly larger than $n(n-1)$ and the mean curvature of the boundary is positive. The second perturbation made the boundary totally geodesic while preserving the scalar curvature lower bound. 

In light of the above gluing constructions for zero and negative curvature, Carlotto asked in \cite[Open Problem 3.16]{C}: Can one design a novel class of counterexamples to Min-Oo's Conjecture based on a gluing scheme? More specifically, can one remove a neighborhood of a point on the boundary of $\bS^n_+$ and then use a gluing scheme analogous to Corvino \cite{Co} or Corvino--Schoen \cite{CoS} (also see \cite{CS} \cite{CD2}) to produce counterexamples to Min-Oo's conjecture.

In \cite{CEM}, Corvino, Eichmair, and Miao produced different counterexamples to Min-Oo's conjecture. Their gluing scheme says that given two smooth compact Riemannian manifolds $(M_i,g_i)$, $i=1,2$, of constant scalar curvature $\kappa$ which contain two non-empty domains $U_i\subset M_i$ where $g_i$ is \emph{not V-static} then one can construct a smooth metric on $M_1\#M_2$ with constant scalar curvature $\kappa$. Using this gluing theorem, they glued 3-spheres near the boundary of the perturbative counterexample constructed by Brendle, Marques, and Neves in \cite[Theorem 7]{BMN}. The resulting manifold satisfies the hypotheses of Min-Oo's Conjecture but has arbitrarily large volume. Their gluing method (which is related to the gluing methods in \cite{CIP}, \cite{IMP}, \cite{IMP2}) has two parts. The first is a conformal deformation to construct a neck connecting the two manifolds. The second is a deformation out of the conformal class to preserve the initial metrics away from the gluing region. 

A different gluing scheme found in \cite{S}, which is a quantitative version of Gromov--Lawson Schoen--Yau tunnels (\cite{GL}, \cite{SY1}), can be used to build upon the Brendle--Marques--Neves counterexamples \cite{BMN} to construct new and more extreme counterexamples to Min-Oo's Conjecture. This construction allows for the removal of some of the assumptions required in \cite{CEM}. In particular, we show the following:

\begin{mainthm}\label{mmainA}
    Let $D>0$, $n\geq3$, and $(M^n,g)$ be a Riemannian manifold such that the scalar curvature, $R_g$, satisfies $R_g> n(n-1)$. Let $\partial \hemi = \Sigma$. Then $N=M\#\hemi$ admits a metric $\tilde{g}$ such that:
    \begin{itemize}
        \item The scalar curvature, $\tilde{g}$, satisfies $R_{\tilde{g}}> n(n-1)$ everywhere.
        \item The induced metric on $\Sigma$ agrees with the standard unit round metric on $\bS^{n-1}$.
        \item  $\Sigma$ is totally geodesic with respect to $\tilde{g}$.
        \item  $\diam_{\tilde{g}}(N)\geq D$.
    \end{itemize}
\end{mainthm}

\begin{remark}
    We note, as opposed to the construction in \cite{CEM}, the gluing scheme employed here does not need the existence of non-V-static metrics on the manifolds that we glue together and we do not need an open subset of constant scalar curvature to which to glue.
\end{remark}

The following corollaries of \Cref{mmainA} provide examples that make strides on the theme present in Carlotto's question by exploring the shape of manifolds that satisfy the geometric hypotheses of Min-Oo's Conjecture.
Specifically, they show that these counterexamples can look geometrically very different than the standard unit round hemisphere. The first corollary shows we can have a counterexample with an arbitrarily large diameter.

\begin{cor}\label{mcorD}
    For any $D>0$, there exists a metric $g$ on $\hemi$, $n\geq 3$, such that:
    \begin{itemize}
        \item The diameter of $M=(\hemi, g)$ satisfies $\diam_g(\hemi)\geq D$.
        \item The scalar curvature satisfies $R_{{g}}> n(n-1)$ everywhere.
        \item The induced metric on $\partial M$ agrees with the standard unit round metric on $\bS^{n-1}$.
        \item  $\partial M$ is totally geodesic with respect to ${g}$.
    \end{itemize}
\end{cor}

We also construct examples of arbitrarily large volumes.

\begin{cor}\label{mcorV}
    For any $V>0$, there exists a metric $g$ on $\hemi$, $n\geq 3$, such that:
    \begin{itemize}
        \item The volume of $M=(\hemi, g)$ satisfies $\vol_g(\hemi)\geq V$.
        \item The scalar curvature satisfies $R_{{g}}> n(n-1)$ everywhere.
        \item The induced metric on $\partial M$ agrees with the standard unit round metric on $\bS^{n-1}$.
        \item  $\partial M$ is totally geodesic with respect to ${g}$.
    \end{itemize}
\end{cor}

\begin{remark}
    In \cite[Remark 6.4]{CEM}, Corvino, Eichmair, and Miao suggest one should be able to construct similar examples using a gluing technique akin to the Gromov--Lawson tunnel construction \cite{GL}. \Cref{mcorV} rigorously proves this remark.
\end{remark}

\begin{remark}
    We note that for our construction we cannot glue near the boundary of the counterexample in \cite[Theorem 7]{BMN} as they do in \cite{CEM}; however, we can glue around any point where the scalar curvature is strictly larger than $n(n-1)$ in  \cite[Theorem 7]{BMN}. Moreover, we can glue around any point in the counterexample constructed in \cite[Corollary 6]{BMN}, in particular near the boundary.
\end{remark}

We want to emphasize that in \Cref{mmainA} the only restriction on $M$ is that it admits a metric with scalar curvature strictly larger than $n(n-1)$. In particular, any closed Riemannian manifold that admits positive scalar curvature can be connected to a counterexample of Min-Oo's Conjecture. Therefore, we obtain new counterexamples with non-trivial topology. For example:

\begin{cor}\label{mcorT}
   Let $p,q,n\in \bN$ such that $n\geq 3$ and $p+q=n$. Then there exists a metric $g$ on $M=\hemi\#\left(\bS^p \times \bS^q\right)$ such that:
   \begin{itemize}
        \item The scalar curvature satisfies $R_{{g}}> n(n-1)$ everywhere.
        \item The induced metric on $\partial M$ agrees with the standard unit round metric on $\bS^{n-1}$.
        \item  $\partial M$ is totally geodesic with respect to ${g}$.
    \end{itemize}
\end{cor}

Corvino, Eichmair, and Miao are able to obtain counterexamples to Min-Oo's Conjecture with a non-trivial fundamental group by gluing a counterexample of Min-Oo's Conjecture to $\bS^1\times \bS^{n-1}$ or $\bS^n/\Gamma$ where $\Gamma$ is a finite subgroup of $SO(n+1)$. \Cref{mcorT} highlights that many more topologies can be produced.

We observe that the result of \Cref{mcorT} can be viewed as a $0$-surgery on the interior of $\hemi \cup (\bS^p \times \bS^q)$. The original construction of Gromov--Lawson \cite{GL} works for surgeries in codimension greater than or equal to three. Therefore, one may wonder if the connected sum procedure in \cite{S} can be extended to surgeries in codimension greater than or equal to three. We prove an analogous statement for surgeries in codimension greater than or equal to three in \Cref{mgluing}, which may be of independent interest. These surgeries are performed on the interior of the manifold. As a result, we get the following statement:
 \begin{mainthm}\label{mmainsurg}
     Let $n\geq 3$ and $M^n$ be a manifold obtained from performing surgeries in codimension greater than or equal to three on the interior of $\bS^n_+$. Then there exists a metric $g$ on $M$ such that:
     \begin{itemize}
        \item The scalar curvature, ${g}$, satisfies $R_{{g}}> n(n-1)$ everywhere.
        \item The induced metric on $\partial M$ agrees with the standard unit round metric on $\bS^{n-1}$.
        \item  $\partial M$ is totally geodesic with respect to ${g}$.
    \end{itemize}
 \end{mainthm}

In \cite[Theorem 7]{BMN}, Brendle, Marques, and Neves construct a metric $g$ on the $n$-hemisphere, $n\geq 3$, with the following properties: the scalar curvature is at least $n(n-1)$ everywhere, there exists a point where the scalar curvature is strictly larger than $n(n-1)$, and the metric agrees with the standard unit round metric on $\hemi$ in a neighborhood of the boundary. They showed that this metric can be used to construct a metric on $\bR\bP^n$, $n\geq 3$, that has analogous properties as the one on the hemisphere. We would like to point out that if we view the $g$ as a metric on an $n$-ball we can produce analogous metrics on any lens space by making the appropriate identifications on the boundary.

\begin{mainthm}
    Let $n\geq 3$. Then for any $n$-dimensional lens space $L^n$ there exists a metric $g$ such that $(L^n,g)$ has the following properties:
    \begin{itemize}
        \item The scalar curvature satisfies $R_g\geq n(n-1)$.
        \item There exists a point $p\in L$ such that $R_g(p)>n(n-1).$
        \item The metric $g$ agrees with the standard unit round metric in a neighborhood of the equator in $L.$
    \end{itemize}
\end{mainthm}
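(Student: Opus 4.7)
My plan is to extend the boundary-identification construction Brendle--Marques--Neves use to produce their metric on $\bR\bP^n$, so that it applies to an arbitrary $n$-dimensional lens space. The input is the metric $g_0$ on $\hemi$ furnished by \cite[Theorem 7]{BMN}: $R_{g_0}\geq n(n-1)$ everywhere, $R_{g_0}(p_0)>n(n-1)$ at an interior point $p_0$, and $g_0$ coincides with the standard round metric on a collar neighborhood $U$ of $\bhemi$. I identify $\hemi$ topologically with the closed $n$-ball $B^n$; then $\partial B^n\cong S^{n-1}$ inherits the standard round metric from $U$, and $U$ becomes a round product collar of $\partial B^n$.

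The main step is to realize the given lens space as $L^n = B^n/\sim$, where $\sim$ is an equivalence relation on $\partial B^n$ generated by restrictions of isometries of the round $S^{n-1}$. For $L^n = \bR\bP^n$, Brendle--Marques--Neves use the antipodal identification, which is a round isometry; I follow their scheme in general, replacing the antipodal map by the appropriate round isometries arising in the classical presentation of $L^n$ as a quotient of $B^n$. In dimension three, for example, $L(p;q) = B^3/\sim$ where $\sim$ identifies the upper and lower hemispheres of $\partial B^3$ via $(\phi,\psi)\mapsto(\phi+2\pi q/p,-\psi)$ in spherical coordinates --- the restriction of an element of $O(3)$ --- and higher-dimensional lens spaces admit analogous presentations.

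Once the identification is fixed, I form the quotient metric $g$ on $L^n$. Because every identifying map on $\partial B^n$ is the restriction of a round isometry of $S^{n-1}$, it extends to an isometry of the collar $U$ via the round product structure $dr^2 + \cos^2(r)\,g_{S^{n-1}}$, and because $g_0$ is exactly the round metric on $U$, the quotient metric is smooth on $L^n$. The three required properties follow immediately: $R_g\geq n(n-1)$ is inherited from $R_{g_0}$ since the quotient is by local isometries; the image of $p_0$ in $L^n$ is an interior point unaffected by the boundary identifications and so satisfies $R_g>n(n-1)$; and near the image of $\partial B^n$ --- which is by construction the equator of $L^n$ --- the metric $g$ equals the standard round metric.

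The main obstacle I anticipate is the topological step of exhibiting every $n$-dimensional lens space as such a quotient $B^n/\sim$ in a way that yields a smooth manifold. For $\bR\bP^n$ in any dimension this is explicit, and for three-dimensional lens spaces one has the classical reflection-rotation presentation; in higher dimensions one must check that the chosen identifying isometries combine into a gluing recipe that produces a manifold free of cone-angle or orbifold singularities along the image of $\partial B^n$.
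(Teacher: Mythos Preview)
Your proposal is correct and follows exactly the approach the paper indicates: the paper's entire argument for this theorem is the sentence ``if we view the $g$ as a metric on an $n$-ball we can produce analogous metrics on any Lens space by making the appropriate identifications on the boundary,'' which is precisely the boundary-identification construction you describe. Your write-up in fact supplies more detail than the paper does, including the explicit three-dimensional model and the honest acknowledgment that the general ball-quotient presentation of $L^n$ with isometric boundary identifications needs to be checked.
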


Related to the work of Brendle, Marques, and Neves is the following rigidity result of Miao and Tam \cite{MiT} for hemispheres. Let $g_1^n$ denote the standard unit round metric on $\hemi$. Their theorem states: if $g$ is a metric on $\hemi$ such that the scalar curvature satisfies $R_g\geq R_{g_1^n}$, the mean curvature of the boundary satisfies $H_g\geq H_{g_1^n}$, $g=g_1^{n-1}$ on the boundary, the volume satisfies $V_g\geq V_{g_1^n}$, and $g$ is sufficiently close to $g_1^n$ in the $C^2$ norm, then $g$ is isometric to $g_1^n$. We note that the theorem of Miao and Tam is false without the perturbative hypothesis. The first example showing this was constructed in \cite{CEM}. We note that \Cref{mcorV} gives an alternative construction showing the need for the $C^2$-closeness in \cite{MiT}. 

Moreover, we construct the following example which should be compared with \Cref{mcorV} and the rigidity result of Miao and Tam.
\begin{mainthm}\label{mmainB}
  Let $n\geq 3$, $0<\eps<\frac{1}{1000}$, and $D>0$. Then there exists a metric $g$ on $\hemi$ with the following properties:
    \begin{itemize}
        \item The scalar curvature satisfies $R_g>n(n-1)$.
        \item The mean curvature on $\bhemi$ satisfies $H_g>0$.
        \item The induced metric on $\bhemi$ is the standard unit round metric on $\bS^{n-1}$.
        \item  The volume satisfies 
        \[
        \frac{1}{2}\omega_n\leq 
        \vol_g(\hemi) \leq \frac{1}{2}\omega_n +\eps,
        \]
        where $\omega_n$ is the volume of the standard unit round $n$-sphere.
        \item The diameter satisfies $\diam_g(\hemi) >D$.
    \end{itemize}
\end{mainthm}

By relaxing the condition on the curvature on the boundary from totally geodesic to $H_g>0$, we construct examples that are related to the result of Miao and Tam while keeping the volume arbitrarily close to the volume of the standard unit round $\hemi$. Moreover, from the proof of \Cref{mmainB} one can see outside a set of arbitrarily small volume the metric $g$ is a small perturbation of the standard unit round metric on $\hemi$.

\section{Acknowledgements}
The author would like to thank the anonymous referee for a multitude of insightful comments which greatly improved the content and style of the paper. The author would like to thank Marcus Khuri and Raanan Schul for their many helpful discussions and suggestions throughout the process of producing this result. The author would also like to thank Alessandro Carlotto for his interest in this result. This paper was partially supported by Simons Foundation International, LTD. This work was supported in part by NSF Grant DMS-2104229 and NSF Grant DMS-2154613.

\section{Gluing Manifolds}
In this section, we will prove \Cref{mmainA}, \Cref{mcorD}, \Cref{mcorV}, \Cref{mcorT}, \Cref{mmainsurg}, and \Cref{mmainB}. These proofs follow from the perturbative constructions in \cite{BMN}, the gluing construction in \cite{S}, and \Cref{mgluing}. First, let us recall some results. The first proposition says that if the scalar curvature of a manifold is bounded below, then one can attach a tunnel and only decrease the lower bound by an arbitrarily small amount while maintaining control of the volume \emph{and} diameter.

\begin{prop}[{\cite[Proposition 4.2]{S}} Constructing Tunnels]\label{mpropT}
Let $(M_i^n,g_i)$, $n\geq 3$, $i=1,2$, be Riemannian manifolds with scalar curvature $R_{g_i}$. Let $\kappa\in \bR$, $d\geq 0$, $j\geq 1$, $p_i$ be a point in the interior of $M_i$, and $\delta\in \left(0, \frac{1}{10}\min\{\mathrm{inj_1}, \mathrm{inj_2}\}\right)$, where $\mathrm{inj}_i$ is the injectivity radius of $M_i$ at $p_i$. Assume $R_{g_i}\geq\kappa$ on $B_1=B_{g_1}(p_1,2\delta)\subseteq M_1$ and $B_2=B_{g_2}(p_2,2\delta) \subseteq M_2$. Then there exists a complete Riemannian metric $\bar{g}$ on the smooth manifold $P^n=M_1 \# M_2$, which is obtained by removing $B_i$ from $M_i$ and gluing in a cylindrical region $(T_\delta,g_\delta)$ diffeomorphic to $\bS^{n-1}\times[0,1]$, satisfying the following:
\begin{enumerate}
    \item the metrics $g_i$ agree with $\bar{g}$ on $M_i\setminus B_i$;
    \item there exists constant $C>0$ independent of $\delta$, $d$, and $j$ such that
\begin{equation}\label{e: prop3}
    d<\diam{({T_\delta})}<C\delta +d \quad \text{and} \quad \vol{({T_\delta})}<C(\delta^n+d\delta^{n-1});
\end{equation}

\item $T_\delta$ has scalar curvature $R_{g_\delta}>\kappa -\frac{1}{j}$.
\end{enumerate}
\end{prop}

The following is the statement of the counterexample of Min-Oo's Conjecture constructed by Brendle, Marques, and Neves.

\begin{theorem}[\cite{BMN}, Corollary 6] \label{mocounter}
    Let $n\geq 3$. There exists a Riemannian metric $g$ on the hemisphere $\hemi$ such that:
    \begin{enumerate}
        \item The scalar curvature, $R_g$, satisfies $R_g>n(n-1)$.
        \item The boundary $\partial \bS^n_+$ is isometric to the standard unit round $(n-1)$-sphere.
        \item The boundary $\bhemi$ is totally geodesic with respect to $g$.
    \end{enumerate}
\end{theorem}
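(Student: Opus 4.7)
The goal is to construct a small perturbation of the round metric $g_{rd}$ on $\hemi$ that strictly increases the scalar curvature above $n(n-1)$ while keeping the induced boundary metric round and the boundary totally geodesic. The central difficulty is that $g_{rd}$ saturates all three conditions, so any first-order perturbation that does not touch the boundary also does not raise $R$ pointwise — a second-order analysis is required. A natural setting for this is a one-parameter family $g_t = g_{rd} + t h + t^2 k + O(t^3)$, where $h$ and $k$ are to be selected so that the boundary data is preserved to all orders in $t$ and the scalar curvature strictly exceeds $n(n-1)$ for all small $t > 0$.

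My plan is as follows. For the first-order term, take $h$ to be a transverse-traceless tensor (both $\tr_{g_{rd}} h = 0$ and $\delta h = 0$) compactly supported in the interior of $\hemi$; this guarantees that both the induced boundary metric and the second fundamental form of $\bhemi$ are unchanged at first order. Since $(\hemi, g_{rd})$ is Einstein with $\mathrm{Ric}_{g_{rd}} = (n-1)g_{rd}$, the linearized scalar curvature reduces to
\[
\dot R[h] = -\Delta(\tr h) + \delta\delta h - \langle \mathrm{Ric}_{g_{rd}}, h\rangle = 0,
\]
so the first-order change in $R$ vanishes. The leading contribution therefore arises at order $t^2$, and is governed by the Lichnerowicz Laplacian on TT tensors. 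By choosing $h$ as a localized bump built from a carefully selected eigentensor (and, if necessary, a combination of modes) the second-order contribution to $R_{g_t} - n(n-1)$ can be arranged to be strictly positive on the support of $h$, of order $t^2$.

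To propagate strict positivity of $R - n(n-1)$ from the support of $h$ to all of $\hemi$, I would follow with a conformal correction $g_t \mapsto (1 + t^2 \eta)^{4/(n-2)} g_t$, where $\eta$ solves a linear Dirichlet–Neumann problem for the conformal Laplacian, with $\eta = \partial_\nu \eta = 0$ on $\bhemi$ and a prescribed positive interior source. Any remaining disturbance of the boundary data at order $t^2$ is absorbed into the choice of $k$. The main obstacle is precisely the over-determined character of this boundary value problem: prescribing both $\eta$ and $\partial_\nu \eta$ on $\bhemi$ forces genuine spectral information about the linearized operator at $g_{rd}$ on the hemisphere. Closing the argument requires showing that the linearization of the joint map
\[
g \longmapsto \bigl(R_g,\; g|_{\bhemi},\; A_g|_{\bhemi}\bigr)
\]
has closed range of the correct form at $g_{rd}$, and that the positive second-order contribution from $h$ lies in the appropriate complement of its kernel; an implicit-function-theorem argument in weighted Hölder spaces then yields the desired metric. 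The spectral/transversality step is the heart of the construction.
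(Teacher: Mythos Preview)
The paper does not prove this statement; it is quoted from \cite{BMN} and used as a black box. There is therefore no ``paper's own proof'' to compare against directly. That said, the paper does describe part of the Brendle--Marques--Neves construction in the proof of \Cref{mainB}: their first perturbation has the explicit form
\[
g(t)=g_{rd}+t\,\mathcal{L}_X g_{rd}+\tfrac{1}{2(n-1)}t^2\,u\,g_{rd},
\]
which already differs from your ansatz in an essential way. Their first-order term is a Lie derivative along a vector field $X$ that is \emph{not} compactly supported in the interior; the second-order term is purely conformal. This gives $R_{g(t)}>n(n-1)$ with $H>0$ on the boundary, and a separate second deformation then forces the boundary to be totally geodesic.

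Against this, your outline has genuine gaps rather than merely a different route. First, the assertion that for some compactly supported TT tensor $h$ the second-order contribution to $R$ is strictly positive on $\mathrm{supp}(h)$ is unsupported; on an Einstein background this quantity is governed by the Lichnerowicz Laplacian and there is no reason to expect a pointwise sign. Second, and more seriously, your ``propagation'' step is exactly where the problem bites: outside $\mathrm{supp}(h)$ you are asking to raise $R$ strictly above $n(n-1)$ on a collar of $\bhemi$ while fixing both the induced metric and the second fundamental form --- this is Min-Oo rigidity for a collar, and your proposed conformal factor with $\eta=\partial_\nu\eta=0$ is, as you acknowledge, overdetermined. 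You then defer everything to an unspecified ``spectral/transversality step'' and an implicit-function argument. But that step \emph{is} the theorem: the round hemisphere carries a static potential (the height function $\cos r$) in the kernel of the adjoint linearized scalar curvature operator, and the Brendle--Marques--Neves construction works precisely by exploiting this kernel element through the $\mathcal{L}_X g_{rd}$ term rather than trying to avoid the boundary. Your proposal identifies the difficulty correctly but does not supply the mechanism that overcomes it.
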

Now we are ready to prove our main result.
\begin{proof}[Proof of \Cref{mmainA}]
   Let $n\geq 3$. Let $(M^n,g)$ be a Riemannian manifold such that the scalar curvature, $R_g$, satisfies $R_g>n(n-1)$. Let $(\hemi, \bar{g})$ be the Riemannian manifold from \Cref{mocounter}. Choose $d\geq D$ and $j$ large enough in \Cref{mpropT} so that the constructed metric on $M\# \hemi$ satisfies the conditions in \Cref{mmainA}.
\end{proof}

\begin{proof}[Proof of \Cref{mcorD}]
    Let $n\geq 3$. In \Cref{mmainA}, take $(M^n,g)$ to be $\left(\bS^n,g^n_\frac{1}{2}\right)$ where $g^n_\frac{1}{2}$ is the round metric on $\bS^n$ of radius $\frac{1}{2}$.
\end{proof}

\begin{proof}[Proof of \Cref{mcorV}]
    First, we will construct for any $n\geq 3$ a closed Riemannian manifold $(M^n,g)$ such that the scalar curvature is strictly larger than $n(n-1)$ and the volume is larger than $\frac{m}{4}\omega_n$, where $\omega_n$ is the volume of $\bS^n$ with its standard unit round metric.
    
    In particular, let $(N,h)$ be the round $n$-sphere of radius $\left(\frac{1}{2}\right)^\frac{1}{n}$ and note $\vol_h(N)= \frac{1}{2} \omega_n$ and $R_h > n(n-1)$. Let $M$ be the manifold obtained from $N$ by attaching (via a connected sum) $m$ round $n$-spheres of radius $\left(\frac{1}{2}\right)^\frac{1}{n}$ around the the equator of $N$. By \Cref{mpropT}, there exists a complete Riemannian metric $g$ on $M$ such that $R_g > n(n-1)$. Observe that $\vol_g(M) > m\times \left(\frac{1}{2} \vol_h(N)\right) = \frac{m}{4}\omega_n$. Choose $m$ large enough so that $\vol_g(M) > V$. 
    
    Now by \Cref{mmainA}, we obtain a metric on $\hemi$ satisfying the conditions of \Cref{mcorV}.
\end{proof}

\begin{proof}[Proof of \Cref{mcorT}]
   Let $n,p,q\geq 1$ and $p+q=n\geq 3$. In \Cref{mmainA}, take $(M^n,g)$ to be $(\bS^p\times \bS^q, h)$ where $h$ is a metric on $\bS^p\times \bS^q$ of scalar curvature strictly larger than $n(n-1)$. In particular, one can choose $h=\frac{1}{2(n(n-1))}\left(g^p_1+g^q_1\right)$ where we recall $g^m_r$ is the standard round metric on $\bS^m$ of radius $r$.
\end{proof}

\begin{proof}[Proof of \Cref{mmainsurg}]
    Let $n\geq 3$ and take $(M^n,g)$ in \Cref{mpropS} to be the manifold from \Cref{mocounter}.
\end{proof}

\begin{proof}[Proof of \Cref{mmainB}]
   Let $n\geq 3$. Recall that $g_1^n$ is the standard unit round metric on $\bS^n$. By \cite[Corollary 15]{BMN} there exists a smooth function $u:\hemi\to\bR$ and a smooth vector field $X$ on $\hemi$ such that for all $t$ small enough we have that the metric 
    \[
    g(t)=g_1^n+t\mathcal{L}_Xg_1^n + \frac{1}{2(n-1)}t^2ug_1^n
    \]
    has scalar curvature strictly greater than $n(n-1)$, the mean curvature of the boundary is strictly positive, and $g(t)=g_1^{n-1}$ on $\bhemi$.
    
     Fix $0<\eps<\frac{1}{1000}$ and $D>0$. We note that this implies $10^n\omega_n\eps^n<\frac{\eps}{3}$. Now choose $t_0$ small enough so that
     \[
     |\vol_{g_1^n}(\hemi)-\vol_{g(t_0)}(\hemi)|<\omega_n\eps^n,
     \]
     where $\omega_n$ is the volume of the $n$-sphere, the scalar curvature of $g(t_0)$ is strictly larger than $n(n-1)$, and the mean curvature of the boundary is strictly positive.
     Using $g(t_0)$ and \Cref{mpropT} we can construct a metric on $\hemi$ that satisfies the conditions of \Cref{mmainB}.  
     
     In particular, in the setting of \Cref{mpropT}, we consider $M_1=(\hemi, g(t_0))$ and $M_2=(\bS^n, g^n_{10\eps})$ and $p_i$ be in the interior of $M_i$, $i=1,2$. Choose $\delta>0$ and $d>D$ in \Cref{mpropT} such that $2\delta<\eps$, $C\left(\delta^n+d\delta^{n-1}\right)<\frac{\eps}{3}$, and $\vol_{g(t_0)}\left(B\left(p_1,2\delta\right)\right)<\omega_n\eps^n$. Also, choose $j$ large enough so that $\min\left\{R_{g^n_{10\eps}}, R_{g(t_0)} \right\} -\frac{1}{j}>n(n-1).$  Let $B_1=B_{g(t_0)}(p_1,2\delta)\subset M_1$ and $B_2=B_{g^n_{10\eps}}(p_2,2\delta) \subset M_2$.  
     
     Thus, we obtain a metric $g$ on $\hemi\#\bS^n$ which has scalar curvature strictly larger than $n(n-1)$, mean curvature of the boundary strictly positive, and $\diam_g\left(\hemi\#\bS^n\right)>D$. Now we will estimate $\vol_g(\hemi\#\bS^n)$. 
     
      \noindent Note 
     \begin{align*}
         \vol_g(\hemi\#\bS^n) &= \vol_{g(t_0)}(\hemi\setminus B_1) + \vol(T) + \vol_{g^n_{10\eps}}(\bS^n\setminus B_2).
     \end{align*}
     and
     \[
     \left|\vol_{g_1^n}(\hemi)-\vol_{g(t_0)}(\hemi\setminus B_1)\right| <2\omega_n\eps^n.
     \]
    Thus,
     \begin{align*}
         \vol_{g_1^n}(\hemi) &\leq \vol_{g(t_0)}(\hemi\setminus B_1) + 2\omega_n\eps^n 
         \\&\leq \vol_{g(t_0)}(\hemi\setminus B_1) + \vol(T) + (10^n-1)\omega_n\eps^n 
         \\& \leq \vol_g(\hemi\#\bS^n)
         \\&= \vol_{g(t_0)}(\hemi\setminus B_1) + \vol(T) + \vol_{g^n_{10\eps}}(\bS^n\setminus B_2) 
         \\&\leq \vol_{g_1^n}(\hemi) + 2\omega_n\eps^n+C(\delta^n+d\delta^{n-1}) +\omega_n\left(10\eps\right)^n,
     \end{align*}
     where in final inequality we use $\mathrm{(ii)}$ from \Cref{mpropT} to estimate $\vol(T)$. Therefore, by our choice of $\eps$ and $\delta$ we see that
     \[
     \frac{1}{2}\omega_n\leq \vol_g(\hemi\#\bS^n) \leq  \frac{1}{2}\omega_n+\eps.
     \]
\end{proof}

\appendix 
\section{\\Surgery Construction with Controlled Scalar Curvature}\label{mgluing}
In this appendix, we prove a technical proposition, which is a Gromov--Lawson type surgery construction in codimension three or larger with a quantitative lower bound on the scalar curvature. As expressed by Gromov--Lawson in \cite{GL} the higher surgery result is very similar to connected sum construction. Therefore, the proof will combine both the improved connected sum construction in \cite{S} and the proof in \cite{GL} (cf. \cite{Wa}). This result may be of independent interest.

\begin{prop}\label{mpropS}
Let $\kappa \in \bR$ and $j>0$. Let $(M^n,g)$, $n\geq 3$, be a Riemannian manifold with scalar curvature satisfying $R_g\geq \kappa$. Let $p+q=n$, $p\geq 1$, and $q\geq 3$. Let $S^p\subseteq M$ be an embedded $p$-sphere in the interior of $M$ with a trivial normal bundle. Let $\delta\in\left(0,\frac{1}{10}\mathrm{rad}_{S^p}\right)$ where $\mathrm{rad}_{S^p}$ is the normal injectivity radius of $S^p$. Let $T(\delta)$ (which is diffeomorphic to $\bS^p\times \bD^q$) be the $\delta$-neighborhood of $S^p$. Then there exists a Riemannian metric $\bar{g}$ on $N^n$, which is the smooth manifold obtained by performing a $p$-surgery on $M$, i.e.
$N=M\setminus T(\delta) \sqcup \left(\bD^{p+1}\times \bS^{q-1}\right),$
satisfying the following:
\begin{enumerate}
    \item the scalar curvature of $N$ satisfies $R_{\tilde{g}} > \kappa -\frac{1}{j}$; \label{1}
    \item  $\bar{g}=g$ on $M\setminus T(\delta);$\label{2}
    \item there is a continuous increasing function $\xi:[0,\infty]\to[0,\infty]$ independent of $j$ and $\delta$ where $\xi(x)\xrightarrow{x\to 0} 0$  such that 
    \[
   \vol_g\left(M\right) - \xi(\delta) \leq \vol_{\bar{g}} \left(N\right) \leq \vol_g\left(M\right) + \xi(\delta).\label{3}
    \]
\end{enumerate}
\end{prop}

To perform the surgery, we will construct a Riemannian metric on $\bD^{p+1}\times \bS^{q-1}$ and attach it to $M\setminus T$. The desired Riemannian metric will be the induced metric on a codimension one submanifold. The codimension one submanifold will be specified by a particular curve from which the metric will inherit the desired properties.

\subsection{A Submanifold defined by a curve}\label{subsub}
Let $(M^n,g)$ be a Riemannian manifold with scalar curvature $R_g\geq \kappa$. Let $p+q=n$, $p\geq 1$, and $q\geq 3$. Let $S^p$ be an embedded $p$-sphere in the interior of $M$ with a trivial normal bundle $NS$. By choosing global orthonormal sections $\nu_1,\ldots,\nu_q$, we specify a diffeomorphism $i:S^p\times \bR^q\to NS$. Now define $r:S^p \times\bR^q\to \bR$, $r(y,x)=||x||$, where $||\cdot||$ is the Euclidean norm on $\bR^q$. Let $S^p\times D^q(s) = \{(y,x)\in S^p\times \bR^q:||x||\leq s\}$. Choose $\delta\in\left(0,\frac{1}{10}\mathrm{rad}_{S^p}\right)$. Therefore, $\mathrm{exp}\circ i\big|_{S^p\times D^q(\delta)}$ is an embedding, where $\mathrm{exp}$ is the exponential map with respect to $g$. Denote the image of this map by $T(\delta)$ and the coordinates $(y,x)$ will be used to denote points on $T(\delta)$. Therefore, $r$ is the distance function to $S^p\times \{0\}$ in $T(\delta)$. Also, curves of the form $\{y\}\times \ell$ when $\ell$ is a geodesic ray in $D^q(\delta)$ emanating from the origin are geodesics in $T(\delta)$.

Let $\gamma$ be a smooth curve in $\bR^2$ and consider the submanifold 
\[
\Sigma = \{(t,y,x)\in \bR\times T(\delta):(t,r(y,x))\in \gamma\}
\]
of $(\bR\times T(\delta), dt^2+g)$. Endow $\Sigma$ with the induced Riemannian metric $g_\gamma$. For brevity, we will suppress the $\delta$ and refer to $T(\delta)$ as $T$ from here on out.
Now we want to calculate the scalar curvature of $\Sigma$. To do so we will need the following lemma from \cite{GL} (cf. \cite{D}). First, we define some notation. If $B(p,\delta)$ is a geodesic ball in a Riemannian manifold $(M,g)$, then in polar coordinates $(r,\boldsymbol{\theta})$ about $p$ the metric $g$ in $B(p,\delta)$ takes the form $dr^2+g_r$. Let $0<\eps<\delta$ and $S_\eps^{n-1} = \{(r,\boldsymbol{\theta})\in B(p,\delta): r=\eps\}$ be the geodesic spheres of radius $\eps$ in $B(p,\delta)$; moreover, the induced metric on $S_\eps^{n-1}$ is $g_\eps$.

\begin{lemma}[{\cite[Lemma 1]{GL}}]\label{mprincurv}
Let $B(p,\delta)$ be a geodesic ball in a Riemannian manifold $(M^n,g)$. The principal curvatures of $S^{n-1}_\eps$ in $B(p,\delta)$ are each of the form $\frac{1}{-\eps} + O(\eps)$ for small $\eps$. Then, as $\eps \to 0$, $\frac{1}{\eps^2}g_\eps \to \frac{1}{\eps^2}g^{n-1}_\eps =  g_1^{n-1}$ in the $C^2$-topology.
\end{lemma}
\begin{remark}
   The above lemma is written with the convention that the second fundamental form is $A(X,Y)=g(\nabla_X Y,N)$ and $N$ is the outward pointing normal.
\end{remark}

We note that submanifolds of the form $S=\bR\times (\{y\}\times \ell)$ of $\bR\times T$ are totally geodesic. To calculate the scalar curvature of $\Sigma$, fix $w \in \Sigma \cap S$. Let $e_1,\dots,e_n$ be an orthonormal basis of of $T_w({\Sigma})$ where $e_1$ is tangent to $\gamma$ and $e_2,\ldots, e_{q}$ is tangent to $\partial D^q$.
 Note that the for points in ${\Sigma}\cap S$ the normal $\nu$ to ${\Sigma}$ in $\bR\times T$ is the same as the normal to $\gamma$ in $S$. 

From the Gauss equations:

\[
R_{dt^2+g}(X,Y,Z,U)=R_{g_\gamma}(X,Y,Z,U) - A(X,U)A(Y,Z) + A(X,Z)A(Y,U)
\]
we see 
\[
\left(K_{g_\gamma}\right)_{ij}=\left(K_{dt^2+g}\right)_{ij} +\lambda_i\lambda_j.
\]
where $\lambda_i$ are principal curvatures corresponding to $e_i$ and $\left(K_{g_\gamma}\right)_{ij}$ and $\left(K_{dt^2+g}\right)_{ij}$ are the respective sectional curvatures. We note that $\lambda_1 = k$ where $k$ is the geodesic curvature of $\gamma$. For $i=2,\ldots,q$ we see by \Cref{mprincurv}
\[
\begin{split}
\lambda_i &= \langle\nabla_{e_{i}} e_{i}, \nu\rangle = \langle\nabla_{e_{i}}  e_{i}, \cos \theta \partial_t+\sin\theta \partial_r\rangle
=\cos \theta \langle\nabla_{e_{i}} e_{i}, \partial_t \rangle+\sin \theta \langle\nabla_{e_{i}}  e_{i}, \partial_r\rangle\\
&= \sin \theta \langle\nabla_{e_{i}}  e_{i},\partial_r\rangle
=\left(\frac{1}{-r} + O(r) \right) \sin \theta,
\end{split}
\]
where $\theta$ is the angle that between $\nu$ and the $t$-axis. And for $(q+1),\ldots, n$ we have
\[
\begin{split}
\lambda_i &= \langle\nabla_{e_{i}}  e_{i},\nu,\rangle = \langle\nabla_{e_{i}}  e_{i}, \cos \theta \partial_t+\sin\theta \partial_r\rangle
=\cos \theta \langle\nabla_{e_{i}}  e_{i},\partial_t\rangle+\sin \theta \langle\nabla_{e_{i}}  e_{i}, \partial_r \rangle\\
&= \sin \theta \langle\nabla_{e_{i}}  e_{i}, \partial_r\rangle
=O(1) \sin \theta.
\end{split}
\]
Now note that
\[
\left(K_{dt^2+g}\right)_{1j}=R_{dt^2+g}(e_j,e_1,e_1,e_j)=R_{dt^2+g}(e_j,\cos \theta \partial_r,\cos \theta \partial_r,e_j)=\cos^2\theta \left(K_g\right)_{\partial_r j}
\]
 and for $i\neq 1$ and $j\neq 1$
\[
\left(K_{dt^2+g}\right)_{ij}=R_{dt^2+g}(e_j,e_i,e_i,e_j)=\left(K_g\right)_{i j}.
\]
We see that
\[
R_{{g_\gamma}} = \sum_{i\neq j} \left(K_{{g_\gamma}}\right)_{ij}  = 2\sum_{j\geq 2}\left(\cos^2\theta \left(K_g\right)_{\partial_r, j} + k\lambda_j\right) + \sum_{\substack{i\neq j \\ i,j\geq 2 }}\left(  \left(K_g\right)_{ij} + \lambda_i\lambda_j\right).
\]
Therefore,
\begin{equation}\label{mscal}
    \begin{split}
        R_{{g_\gamma}} &= R_g - 2\mathrm{Ric}(\partial_r,\partial_r)\sin^2\theta 
        + (q-1)(q-2)\left(\frac{1}{r^2} + O(1)\right) \sin^2\theta
         \\&\qquad- (q-1)\left(\frac{1}{r} + O(r)\right) k \sin\theta,
    \end{split}
\end{equation}
where the terms involving sectional curvature combine to the first two terms, the $\lambda_i\lambda_j$ terms combine to give the second term, and the $k\lambda_j$ terms combine to give the last term.

Next, we will construct a curve $\gamma$ in $\bR^2$.
If we construct $\gamma:[0,L]\to \bR^2$, $\gamma(s)=(t(s),r(s))$, such that near $s=0$ we have that $\gamma$ is a vertical line segment on the $r$-axis, then $\Sigma$ will smoothly attach to $M\setminus T$. Moreover, we will construct $\gamma$ such that near $s=L$ we have that $\gamma$ is a horizontal line segment parallel to the $t$-axis. Using \eqref{mscal}, in place of \cite[(4.1)]{S}, during the construction of the curve in \cite[Section 4]{S} results in our desired curve.  We record the construction of $\gamma$ in the following lemma:
\begin{lemma}\label{mcurve}
   Let $S^p\subseteq M$ and $\Sigma$ be as above. Let $\delta\in\left(0, \frac{1}{10}\mathrm{rad}_{S^p}\right)$ and $j>0$. Then there exists a smooth curve with arclength parameterization $\gamma:[0, L]\to \bR^2$, $\gamma(s)=(t(s),r(s))$, such that $\mathrm{length}(\gamma)\leq C\delta$ and $R_{g_\gamma}>\kappa - \frac{1}{j}$, where $C$ is independent of $j$ and $\delta$. Moreover, near $s=0$, we have that $\gamma$ is a vertical line segment on the r-axis and $s=L$ we have that $\gamma$ is a horizontal line segment parallel to the t-axis.
\end{lemma}

Therefore, we have now constructed a Riemannian metric, $g_\gamma$, on $\Sigma$ which smoothly attaches to $M\setminus T$. Let $P$ be the Riemannian manifold defined as $P=\left(M\setminus T\right) \sqcup \Sigma$ and with a slight abuse of notation we will let $g_\gamma$ be the metric on all of $P$. Note that by construction $P$ is a smooth manifold with boundary, which satisfies $R_{g_\gamma} > \kappa -\frac{1}{j}$.

\begin{remark}\label{r:0surgery}
    Up until now the same argument would work for the $p=0$ case (i.e., \Cref{mpropT}). Further details will be given in Subsection \ref{0surgery}. For now we will focus on the cases $p\geq1$.
\end{remark}

Now we must make a technical point that will become apparent later. Fix $\psi:[0,1]\to\bR$ a smooth increasing function satisfying
 \[
 \psi(s)=
 \begin{cases}
     0 & s\in\left[0,\frac{1}{4}\right]\\
     1 & s\in \left[\frac{3}{4},1\right].
 \end{cases}
 \]
Define $\Lambda_n= 2(n-1)|\psi''(s)|+(n-1)(n-2)|\psi'(s)|$. 
By taking $\delta$ smaller, if necessary, we may assume that the scalar curvature of $\partial P$ with its induced metric $g_{\partial P}$ satisfies
 \[
 R_{g_{\partial P}}>\kappa+\Lambda_n-\frac{1}{j}.
 \]

Now we want to find a homotopy, $\{h_s\}$, through metrics with scalar curvature strictly greater than $\kappa+\Lambda_n-\frac{1}{j}$, from the induced metric on $\partial P$ to $g^p_a+g^{q-1}_b$ on $\bS^p \times \bS^{q-1}$ where $g^m_\tau$ is the standard round metric on $\bS^m$ of radius $\tau$. Define $\bar{\delta}= \min\left\{\frac{1}{10\sqrt{|\Lambda_n+\kappa|}},\delta, 1\right\}$. By taking $a\leq1$ and $b<\bar{\delta}$ we ensure scalar curvature of $g^p_a+g^{q-1}_b$ is strictly larger than $\kappa+\Lambda_n-\frac{1}{j}$. Then one can construct a metric $ds^2+h_s$ on the collar, $C=[0,2]\times \bS^p \times \bS^{q-1}$, such that at $h_0$ is the induced metric on $\partial P$ and at $h_1$ is a product metric $g^p_a+g^{q-1}_b$ on $\bS^p \times \bS^{q-1}$. Finally, we can glue in $\left(E=\bD^{p+1}\times\bS^{q-1}, g_E\right)$ where $g_E$ is the product metric of a round metric on the hemisphere and a round metric on the sphere to complete the surgery. We will handle these steps in the next lemmas.

A version of the following lemma was first proved in \cite{GL} (cf. \cite{Wa}) where one wanted to ensure the Riemannian metrics in the homotopy have positive scalar curvature. Here we upgrade this lemma so that the homotopy is through metrics with scalar curvature strictly larger than $\kappa+\Lambda_n-\frac{1}{j}$.

\begin{lemma}\label{mhomotopy}
     Let $g_{\partial P}$ be the induced metric on $\partial P$ from $g_\gamma$. Then there exists a homotopy through Riemannian metrics with scalar curvature strictly larger than $\kappa+\Lambda_n-\frac{1}{j}$ to the product metric $g^p_1+ g_\eta^{q-1}$ for some $\eta\in(0,1)$.
\end{lemma}
\begin{proof}
    We follow the proof of \cite{Wa}. Let $g_{\partial P}$ be the induced metric on $\partial P$ from $g_\gamma$, recall $r(L)<\delta$, and set $\eta=r(L)$. Let $\pi: NS^p\to S^p$ be the normal bundle to the embedded $S^p$ in $M$. The Levi-Civita connection on $M$, with respect to $g_\gamma$, gives rise to a normal connection on the total space of the normal bundle and so also a horizontal distribution $\mathcal{H}$ on the total space of $NS$. Equip the fibers of $NS$ with the metric $\hat{g}=h_\eta$, where $h_\eta$ is the Riemannian metric on the $q$-ball that arises from the upper hemisphere of a round $q$-sphere of radius $\eta$. Now consider the Riemannian submersion $\pi:(NS,\tilde{g})\to (\bS^p, \check{g})$ where $\check{g}=g_{\partial P}$ and $\tilde{g}$ is the unique submersion metric arising from $\hat{g}$, $\check{g}$, and $\mathcal{H}$. Now we want to restrict this metric $\tilde{g}$ to the 
    sphere bundle, $\bS^p\times \bS^{q-1}(\eta)$, and by a slight abuse of notation we will call this restricted metric $\tilde{g}$. We note that $\tilde{g}$ restricted to $\bS^{q-1}_\eta$ is $g^{q-1}_\eta$, i.e., the round metric on the $(q-1)$-sphere of radius $\eta$. 

    Now by \Cref{mprincurv} (cf. \cite[Lemma 3.9]{Wa}) we have that $g_{\partial P}$ converges to $\tilde{g}$ as $\eta \to 0$. Therefore for small $\eta$, there is a homotopy from $g_{\partial P}$ to $\tilde{g}$ through metrics with scalar curvature strictly greater than $\kappa+\Lambda_n-\frac{1}{j}$. Now we want to construct a homotopy from $\tilde{g}$ to the product of two round spheres. Viewing $\tilde{g}$ as a submersion metric and applying O'Neill's formula \cite[Chapter 9]{Bes} we have the following equation for scalar curvature:
    \[
     \tilde{R} = \check{R} \circ \pi + \hat{R} - |A|^2 -|T|^2 - |n|^2 -2\check{\delta}(n).
    \]
    $T$ is a tensor that measures the obstruction to the bundle having totally geodesic fibers. By construction, the fibers are totally geodesic so $T = 0$ and $n$ is the mean curvature vector and so vanishes when $T$ vanishes. $A$ is O'Neill's integrability tensor and it measures the integrability of the horizontal distribution, i.e., when $A$ vanishes the horizontal distribution is integrable. Therefore, we have
    \[
    \tilde{R} = \check{R} \circ \pi + \hat{R} - |A|^2
    \]
    We want to deform $\tilde{g}$ through Riemannian submersions to one with a base metric $g^p_1$ while keeping scalar curvature larger than $\kappa+\Lambda_n-\frac{1}{j}$. This can be done because the deformation occurs on a compact interval and we can shrink $\eta$ to make $\hat{R}$ arbitrarily large. We just need to ensure when we shrink $\eta$ that the $|A|$ term does not grow. This follows from the canonical variation formula \cite[Chapter 9]{Bes} which states that if we shrink the fiber metric by $t$ then the scalar curvature of the new submersion metric $\tilde{R}_t$ satisfies
    \[
     \tilde{R}_t = \check{R} \circ \pi + \frac{1}{t}\hat{R} - t|A|^2
    \]
    Finally, we can perform another linear homotopy through Riemannian submersions to the standard product metric $g^p_1+g^{q-1}_\eta$, i.e., where $|A|=0$. Again we can shrink $\eta$ if necessary in order to preserve the scalar curvature bound.
\end{proof}

Thus, we have constructed the desired homotopy $h_s$. Recall our goal is to construct a metric on, $ds^2+h_s$ the collar, $C=[0,2]\times\bS^p\times\bS^{q-1}$ and maintain the scalar curvature bound. We now construct the metric on the collar. First, define $C'=[0,1]\times\bS^p\times\bS^{q-1}$ and recall that $\bar{\delta} = \min\left\{\frac{1}{10\sqrt{|\Lambda_n+\kappa|}},\delta,1\right\}$.
\begin{lemma}\label{concord1}
 Let $\psi:[0,1]\to\bR$ be the function fixed above. Recall that $\psi$ is a smooth increasing function satisfying
 \[
 \psi(s)=
 \begin{cases}
     0 & s\in\left[0,\frac{1}{4}\right]\\
     1 & s\in \left[\frac{3}{4},1\right].
 \end{cases}
 \]
 Let $\beta\in(0,\bar{\delta})$ and define $\varphi(s)=1-(1-\beta^2)\psi(s)$.
Then the scalar curvature of $(C', ds^2+\varphi^2(s) h_0)$ satisfies
   \[
   R_{ds^2+\varphi^2(s)h_0}\geq  R_{h_0}-\Lambda_n > \kappa-\frac{1}{j}
   \]
\end{lemma}
\begin{proof}
    Note that $(C',ds^2+\varphi^2(s) h_0)$ is a warped and that $\beta^2\leq|\varphi|\leq 1$. Thus, by \cite[Proposition 1.13]{LeeBook}
    \begin{align*}
    R_{ds^2+\varphi^2(s) h_0} &= \frac{R_{h_0}-2(n-1)\varphi(s)\varphi''(s)-(n-1)(n-2)|\varphi'(s)|}{\varphi^2(s)}\\
    &\geq\frac{R_{h_0}-2(n-1)|\psi''(s)|-(n-1)(n-2)|\psi'(s)|}{\varphi^2(s)}\\
    &\geq R_{h_0}-2(n-1)|\psi''(s)|-(n-1)(n-2)|\psi'(s)|
    \end{align*}
\end{proof}
\begin{lemma}\label{concord2}
    There exists a $\beta\in(0,\bar{\delta})$ such that $(C',ds^2+\beta^2 h_s)$ has scalar curvature larger than $\kappa-\frac{1}{j}$.
\end{lemma}
\begin{proof}
  
   We note that we can view $ds^2+\beta^2h_s$ as a submersion metric where we are shrinking the fibers by $\beta^2$. Therefore, by the canonical variation formula \cite[Chapter 9]{Bes} we have
   \[
   R_{ds^2+\beta^2 h_s} = \frac{1}{\beta^2} R_{h_s} +\beta^2 |A|
   \]
   Now choose $0<\beta<\bar{\delta}$ such that 
    \[
    R_{ds^2+\beta^2 h_s}=\frac{1}{\beta^2}R_{h_s}+\beta^2 |A| >  \kappa -\frac{1}{j}
    \]
\end{proof}

We note that the metric from \Cref{concord1} smoothly attaches to $P$ and the metric from \Cref{concord2} smoothly attaches to $(C',ds^2+\phi^2(s)h_0)$ (i.e. the  metric from \Cref{concord1}). Thus, we have constructed the metric (which with a slight abuse of notation we will be denoted as $ds^2+h_s$) on the collar $C$ that transitions from the induced metric on $\partial P$ to  $g^p_a+g^{q-1}_b$, where $a=\beta$ and $b=\beta\eta$, such that scalar curvature is larger than $\kappa-\frac{1}{j}$ because our choice of $\beta$ was small enough. Finally, recall that now we can attach $E$. Thus, we have completed the construction of $(N,\bar{g})$ and verified \eqref{1} and \eqref{2}.

Finally, we can compute the volume estimate \eqref{3}. We note from the construction that $N$ consists of four disjoint pieces glued together: $M\setminus T$, $\Sigma$, $C$, and $E$.  

\begin{lemma}
    Consider $(N,\bar{g})$ as constructed above. Then there is a continuous increasing function $\xi:[0,\infty]\to [0,\infty]$ independent of $j$ and $\delta$ where $\xi(x)\xrightarrow{x\to 0} 0$ such that 
    \[
 \vol_g\left(M\right) - \xi(\delta) \leq \vol_{\bar{g}} \left(N\right) \leq \vol_g\left(M\right) + \xi(\delta).
    \]
\end{lemma}
\begin{proof}
     We note that
  \[
   \vol_g\left(M\right) -  \vol_g\left(T\right) \leq \vol_{\bar{g}} \left(N\right) \leq \vol_g\left(M\right) + \vol_{\bar{g}}\left(\Sigma\right) +\vol_{\bar{g}}\left(C\right) + \vol_{\bar{g}}\left(E\right)
  \]
  It suffices to show that to bound the volume of $\Sigma$, $C$, $E$, and $T$. Since the metric on $E=\bD^{p+1}\times \bS^{q+1}$ is the product metric the round metric of radius $a$ on the $(p+1)$-hemisphere with the round metric of radius $b$ on a $(q-1)$-sphere, we have that 
   \[
   \vol(E)\leq \omega_{p+1}\omega_{q-1} a^{p+1}b^{q-1} \leq \omega_{p+1}\omega_{q-1} \delta^{q-1},
   \]
   where $\omega_m$ is the volume of the unit round  $m$-sphere. 
   
   Let $\xi_i:[0,\infty]\to[0,\infty]$, $i=1\ldots 5$, be continuous increasing functions such that $\xi_i(x)\xrightarrow{x\to 0} 0$. The volume bound for $C$ will follow from applying the coarea formula. 
   
   First, recall that the homotopy $h_s$ has two parts $I_1=[0,s_0]$ and $I_2=[s_0,2]$. Let $I_1$ correspond to the homotopy from $g_{\partial P}$ to $\tilde{g}$ which comes from the $C^2$-convergence of $g_{\partial P}$ to $\tilde{g}$. Therefore, $|\vol_{h_s}-\vol_{\tilde{g}}|\leq \xi_1\left(\delta\right)$ for $s\in I_1$ where $\xi_1$ is independent of $j$ and $\delta$. Let $I_2$ correspond to the remaining part of the homotopy and recall for all $s\in I_2$, we have that $h_s$ are submersion metrics.
   
   It will suffice to estimate the volume for $h_s$ for $s\in I_2$ because $\tilde{g}=h_{s_0}$ and $\vol_{\tilde{g}}$ is sufficiently close to the volumes of $h_s$, $s\in I_1$. Now for $s\in I_2$, consider $\pi_s:\left(\bS^p\times\bS^{q-1}, h_s\right)\to \left(\bS^p, g_s\right)$ the submersion associated with $h_s$. Using the coarea formula, we then have
   \[
   \vol_{h_{s}}(\bS^{p}\times \bS^{q-1}) = \int_{\bS^p} \left(\int_{\pi_s^{-1}(y)}dV_{h_{s} |_{\pi_s^{-1}(y)}} (x)\right) dV_{g_s}(y) \leq\xi_2(\delta)\vol_{g_s}\left(\bS^p\right),
   \]
   where the last line follows by \Cref{mprincurv} and $\xi_2$ is independent of $j$ and $\delta$. By the compactness of $I_2$ we have that $\vol_{g_s}\left(\bS^p\right)$ is bounded by some constant $c_3$ independent of $s$; moreover, by the construction of the submersions and the homotopy, $c_3$ is independent of $\delta$ and $j$. Therefore, by the coarea formula
   \[
       \vol_{ds^2+h_s}(C)=\int_0^2 \vol_{h_{s}}(\bS^{p}\times \bS^{q-1}) ds\leq  \xi_3(\delta),
   \]
   where $\xi_3$ is independent of $j$ and $\delta$. As $T$ is a tubular neighborhood of $S^p\subset M$, it has a metric of the form $g_T=dr^2+g_{r}$ on $[0,\delta]\times \bS^p\times \bS^{q-1}$ and has volume less than $\xi_4(\delta)$, where $\xi_4$ is independent of $j$ and $\delta$. By \Cref{mprincurv}, \Cref{mcurve}, and the definition of $\Sigma$, the metric on $\Sigma$ is of the form $g_\gamma=ds^2+g_{r(s)}$ on $[0,\mathrm{length}(\gamma)]\times \bS^p\times \bS^{q-1}$ with $\mathrm{length}(\gamma)\leq C\delta$. Thus, we have $\vol_{g_\gamma}(\Sigma)\leq \xi_5(\delta)$, where $\xi_5$ is independent of $j$ and $\delta$. 
\end{proof}

\subsection{0-Surgery}\label{0surgery}
One may wonder if \Cref{mpropT} can be derived using the same proof as \Cref{mpropS}. The answer is yes with minor variations (for full details see \cite{D, S}). For the sake of completeness, we will quickly repeat these details. Recalling Subsection \ref{subsub}, we note when $p=0$ that $T(\delta)$ is a disjoint union of two embedded disks. If we replace $T(\delta)$ with a connected component $T_1(\delta)$ of $T(\delta)$ then we can follow the same argument up to \Cref{r:0surgery} (i.e., the construction of the metric $g_\gamma$ on $(M\setminus T)\sqcup \Sigma$). Specifically, using the curve $\gamma_1(s)=((t_1(s),r_1(s))$ provided by \Cref{mcurve}, we have constructed $\Sigma_1$ which is half of a tunnel. Analogously, for the other connected component $T_2(\delta)$ of $T(\delta)$, we construct a curve $\gamma_2(s)=((t_2(s),r_2(s))$ and $\Sigma_2$, which is another half of a tunnel. By making the same choices in the construction of $\gamma_1$ and $\gamma_2$, we can ensure that $r_1(L)=r_2(L)$. Now we will modify the metric at the end of $\Sigma_1$ so that it is a product metric of a round sphere and an interval (as is done in \cite{D}). Let $0<\alpha<\delta$ be small enough so that for $L-\alpha \leq s\leq L$, we have $\gamma_1(s)$ is a horizontal line segment parallel to the $t$-axis. Therefore, by the construction of $\gamma_1$, we have for $s\in [L-\alpha, L]$ that $t_1(s)=s$ and $r_1(s)\equiv c$ for some constant $c$. Let $a=t_1(L-\alpha)=L-\alpha$ and $b=t_1(L)=L$. We note that the induced metric on the end of the half-tunnel, $\{(t,y)\in \Sigma_1: a\leq t\leq b\}$, is $h_1=g_c+dt^2$, where  $g_c$ is the induced metric on the $(n-1)$-sphere $t^{-1}(\{b\})$. Let $h_1'=c^2g_{1}^{n-1}+dt^2$ and $\phi(t)=\psi\left(\frac{t-a}{\alpha}\right)$ where $\psi(u)$ is a smooth function on $[0,1]$ vanishing near zero, increasing to 1 at $u=\frac{3}{4}$ and equal to 1 for $u>\frac{3}{4}$. Define the metric $h$ for $t\in[a,b]$ as
$h(q,y)=g_c(q,y)+\phi(t)\left(c^2g_{1}^{n-1}-g_c\right)+dt^2.$
This metric transitions smoothly between $h_1$ and $h_1'$. Note
\[
h-h_1=\phi(t)\left(c^2g_{1}^{n-1}-g_c\right) = \phi(t)c^2\left(g_{1}^{n-1}-\frac{1}{c^2}g_c\right)
\]
and that the first and second derivatives of $\phi(t)$ are $O(\alpha^{-1})$ and $O(\alpha^{-2})$, respectively. So by \Cref{mprincurv}, we have that the second derivatives of $h-h_1$ are $O(\alpha^2)$. Therefore, for $\alpha$ small enough, the scalar curvature of $h$ is close to the scalar curvature of $h_1$ which, again by \Cref{mprincurv}, has scalar curvature larger than $\kappa-\frac{1}{j}$ for small enough $\alpha$. Therefore, we have changed the metric at the end of $\Sigma_1$ so that it looks like $c^2g_{1}^{n-1}+dt^2$. We can perform the analogous procedure to $\Sigma_2$. Therefore, we can immediately glue together the two half-tunnels and construct a tunnel while keeping the scalar curvature larger than $\kappa-\frac{1}{j}$. If we wish to make a long tunnel, then before gluing the two half-tunnels together insert a round cylinder $A_{c,d}=([0,d]\times \bS^{n-1}, dt^2+c^2g_1^{n-1})$. The estimates in \eqref{e: prop3} follow because $d<\diam(A_{c,d})<d+2\pi c^2$ and $\vol(A_{c,d})=d\omega_{n-1}c^{n-1}$ and outside of $A_{c,d}$ the length of $\gamma_i$, $i=1,2$, is $O(\delta)$ and each $t=constant$ cross-section is close to a round sphere of radius less than $\delta$.

\printbibliography

\end{document}